\newcommand\version{June 8, 2009}
\newtheorem{theorem}{Theorem}[section]
\newtheorem{proposition}[theorem]{Proposition}
\newtheorem{lemma}[theorem]{Lemma}
\theoremstyle{definition}
\newtheorem{assumption}[theorem]{Assumption}
\theoremstyle{remark}
\numberwithin{equation}{section}
\renewcommand{\epsilon}{\varepsilon}
\newcommand{\loc}{{\rm loc}}
\renewcommand{\phi}{\varphi}
\newcommand{\R}{\mathbb{R}}
\newcommand{\Sph}{\mathbb{S}}
\begin{document}

\title[Fractional Hardy inequalities --- \version]{Sharp fractional Hardy inequalities in half-spaces}

\author{Rupert L. Frank}
\address{Rupert L. Frank, Department of Mathematics,
Princeton University, Washington Road, Princeton, NJ 08544, USA}
\email{rlfrank@math.princeton.edu}

\author{Robert Seiringer}
\address{Robert Seiringer, Department of Physics, Princeton University,
P.~O.~Box 708,
       Princeton, NJ 08544, USA}
\email{rseiring@princeton.edu}

\thanks{Support through DFG grant FR 2664/1-1 (R.F.) and U.S. NSF grants PHY-0652854 (R.F.) and PHY-0652356 (R.S.)
is gratefully acknowledged.}

\dedicatory{Dedicated to V. G. Maz'ya}

\begin{abstract}
We determine the sharp constant in the Hardy inequality for fractional Sobolev spaces on half-spaces. Our proof relies on a non-linear and non-local version of the ground state representation.
\end{abstract}

\maketitle

\section{Introduction and main results}

This short note is motivated by the paper \cite{BD} concerning Hardy inequalities in the half-space $\R^N_+ := \{ (x',x_N):\ x'\in\R^{N-1}, x_N>0 \}$. The fractional Hardy inequality states that for $0<s<1$ and $1\leq p<\infty$ with $ps\neq 1$ there is a positive constant $\mathcal D_{N,p,s}$ such that
\begin{equation}\label{eq:mainintro}
 \iint_{\R^N_+\times\R^N_+} \frac{|u(x)-u(y)|^p}{|x-y|^{N+ps} } \,dx\,dy
\geq \mathcal D_{N,p,s} \int_{\R^N_+} \frac{|u(x)|^p}{x_N^{ps}}\,dx
\end{equation}
for all $u\in C_0^\infty(\overline{\R^N_+})$ if $ps<1$ and for all $u\in C_0^\infty(\R^N_+)$ if $ps>1$. In \cite{BD} the sharp (that is, the largest possible) value of the constant $\mathcal D_{N,2,s}$ for $p=2$ is calculated. Our goal in this paper is to determine the sharp constant $\mathcal D_{N,p,s}$ for \emph{arbitrary} $p$.

Indeed, we shall see that the sharp inequality \eqref{eq:mainintro} follows by a minor modification of the approach introduced in \cite{FrSe}. In that paper we calculated the sharp constant $\mathcal C_{N,p,s}$ in the inequality
\begin{equation}\label{eq:jfa}
 \iint_{\R^N\times\R^N} \frac{|u(x)-u(y)|^p}{|x-y|^{N+ps} } \,dx\,dy
\geq \mathcal C_{N,p,s} \int_{\R^N} \frac{|u(x)|^p}{|x|^{ps}}\,dx
\end{equation}
for all $u\in C_0^\infty(\R^N)$ if $1\leq p<N/s$ and for all $u\in C_0^\infty(\R^N\setminus\{0\})$ if $p>N/s$. A (non-sharp) version of \eqref{eq:jfa} was used by Maz'ya and Shaposhnikova \cite{MS} in order to simplify and extend considerably a result of Bourgain, Brezis and Mironescu \cite{BBM} on the norm of the embedding $\dot{W}^s_p(\R^N) \subset L_{Np/(N-ps)}(\R^N)$. Our proof of \eqref{eq:jfa} relied on a \emph{ground state substitution}, that is, on writing $u(x)=\omega(x) v(x)$ where $\omega(x)=|x|^{-(N-ps)/p}$ is a solution of the Euler-Lagrange equation corresponding to \eqref{eq:jfa}. In this note we shall prove \eqref{eq:mainintro} using that $\omega(x)=x_N^{-(1-ps)/p}$ satisfies the Euler-Lagrange equation corresponding to \eqref{eq:mainintro}.

We refer to \cite{BD,D,KMP} and the references therein for motivations and applications of fractional Hardy inequalities.

In order to state our main result let $1\leq p<\infty$ and $0<s<1$ with $ps\neq 1$ and denote by $\mathcal W^s_p(\R^N_+)$ the completion of $C_0^\infty(\R^N_+)$ with respect to the left side of \eqref{eq:mainintro}.
It is a consequence of the Hardy inequality that this completion is a space of functions. Moreover, it is well-known that for $ps<1$, $\mathcal W^s_p(\R^N_+)$ coincides with the completion of $C_0^\infty(\overline{\R^N_+})$.

\begin{theorem}[\textbf{Sharp fractional Hardy inequality}]\label{main}
 Let $N\geq 1$, $1\leq p<\infty$ and $0<s<1$ with $ps\neq 1$.  Then for all $u\in \mathcal W^s_p(\R^N_+)$,
\begin{equation}\label{eq:main}
 \iint_{\R^N_+\times\R^N_+} \frac{|u(x)-u(y)|^p}{|x-y|^{N+ps} } \,dx\,dy
\geq \mathcal D_{N,p,s} \int_{\R^N_+} \frac{|u(x)|^p}{x_N^{ps}}\,dx
\end{equation}
with
\begin{equation}\label{eq:mainconst}
\mathcal D_{N,p,s} := 2 \pi^{(N-1)/2} \frac{\Gamma((1+ps)/2)}{\Gamma((N+ps)/2)}
\int_0^1 \left|1 - r^{(ps-1)/p} \right|^p \frac{dr}{(1-r)^{1+ps}} \,.
\end{equation}
The constant $\mathcal D_{N,p,s}$ is optimal.
 If $p=1$ and $N=1$, equality holds iff $u$ is proportional to a non-increasing function. If $p>1$ or if $p=1$ and $N\geq 2$, the inequality is strict for any function $0\not\equiv u\in \mathcal W^s_p(\R^N_+)$.
\end{theorem}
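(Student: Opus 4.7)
The plan is to adapt the non-linear ground state representation of \cite{FrSe} to the half-space. The candidate ground state is $\omega(x):=x_N^{-(1-ps)/p}$, which is positive, translation invariant in $x'$, and homogeneous in $x_N$. The first step is to verify that $\omega$ satisfies (in a principal value sense) the Euler--Lagrange equation associated with \eqref{eq:main}:
\begin{equation}\label{eq:ELplan}
 2\,\mathrm{p.v.}\!\int_{\R^N_+}\!\frac{|\omega(x)-\omega(y)|^{p-2}(\omega(x)-\omega(y))}{|x-y|^{N+ps}}\,dy \;=\; \mathcal D_{N,p,s}\,\frac{\omega(x)^{p-1}}{x_N^{ps}}.
\end{equation}
The integral over $y'\in\R^{N-1}$ can be carried out explicitly and contributes the Gamma-function factor $\pi^{(N-1)/2}\Gamma((1+ps)/2)/\Gamma((N+ps)/2)$; the remaining one-dimensional principal value in $r=y_N/x_N$, after exploiting the symmetry $r\mapsto 1/r$, collapses to $\int_0^1 |1-r^{(ps-1)/p}|^p (1-r)^{-1-ps}\,dr$, and together with the prefactor $2$ on the left of \eqref{eq:ELplan} reproduces precisely \eqref{eq:mainconst}.

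The core of the proof is the ground state representation itself. Substituting $u=\omega v$ and applying the pointwise algebraic inequality at the heart of \cite{FrSe} (together with \eqref{eq:ELplan} to identify the coefficient of the Hardy term), one obtains
\begin{equation*}
 \iint_{\R^N_+\times\R^N_+}\!\!\frac{|u(x)-u(y)|^p}{|x-y|^{N+ps}}\,dx\,dy \;=\; \mathcal D_{N,p,s}\!\int_{\R^N_+}\!\frac{|u|^p}{x_N^{ps}}\,dx \;+\; R[v],
\end{equation*}
with $R[v]\geq 0$ given by an explicit non-negative double-integral expression in $v$ and $\omega$. Dropping $R[v]$ proves \eqref{eq:main}; inserting smooth truncations of $\omega$ itself (regularised near $\{x_N=0\}$ and near infinity) yields a sequence along which $R[v]\to 0$, so that $\mathcal D_{N,p,s}$ is sharp.

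The analysis of equality cases is then a direct consequence of the structure of $R[v]$. When $p>1$, strict convexity forces $R[v]=0$ to imply $v$ constant, hence $u$ proportional to $\omega$; since $\omega\notin\mathcal W^s_p(\R^N_+)$ this gives $u\equiv 0$. When $p=1$ the remainder is only weakly convex and vanishes whenever $v$ is monotone along the lines relevant to the integrand; combined with the admissibility $u\in\mathcal W^1_1(\R^N_+)$ this is possible for a nonzero $u$ only in dimension $N=1$, where it yields the characterisation via non-increasing $u$ stated in the theorem.

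The step I expect to be the main obstacle is the rigorous derivation of \eqref{eq:ELplan} and of the ground state identity, since the defining integrals are only conditionally convergent near the diagonal $x=y$ and $\omega$ is not smooth up to $\partial\R^N_+$. This should be handled by truncating the kernel to $\{|x-y|>\epsilon\}$, proving every identity in that regularised form (where all manipulations are justified), and then passing to the limit $\epsilon\to 0$ using the cancellation built into the principal value. The remaining arguments follow the template set up in \cite{FrSe}.
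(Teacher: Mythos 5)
Your strategy for the inequality itself coincides with the paper's: the ground-state substitution $u=\omega v$ with $\omega(x)=x_N^{-(1-ps)/p}$, verification of the Euler--Lagrange equation by first integrating out $y'$ (producing the Gamma-factor via the Bessel-type identity) and then evaluating a one-dimensional principal value, all carried out with the kernel truncated near the diagonal and a limit $\epsilon\to0$. Your $r\mapsto 1/r$ symmetrization of the one-dimensional integral does reproduce \eqref{eq:mainconst} (the paper instead quotes the whole-line computation of \cite{FrSe} and corrects by the absolutely convergent integral over $y<0$, but the outcome is the same). The equality discussion for $p>1$ and the monotonicity/non-integrability argument for $p=1$ also match the paper.

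The genuine gap is in the sharpness argument. For $p\neq2$ there is no exact identity $E[u]=\mathcal D_{N,p,s}\int x_N^{-ps}|u|^p\,dx+R[v]$ with a usable remainder; the lower bound comes from integrating a pointwise inequality, so ``making $R[v]$ small'' really means exhibiting trial functions $u_n$ with $E[u_n]\big/\int x_N^{-ps}|u_n|^p\,dx\to\mathcal D_{N,p,s}$. Since both quantities necessarily diverge along any almost-extremal sequence, one needs $R[v_n]=o\bigl(\int x_N^{-ps}|u_n|^p\,dx\bigr)$, not $R[v_n]\to0$, and this is an upper-bound computation that the representation formula does not supply. Moreover, for $N\geq2$ a truncation of $\omega$ ``near $x_N=0$ and near infinity'' is not integrable unless you also cut off in the $x'$-directions, and the transverse cut-off error must then be shown to be negligible. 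The paper handles both points at once with product trial functions $u_n(x)=\chi_n(x')\phi(x_N)$, $\chi_n$ spreading out in $x'$: using \eqref{eq:bessel} the Rayleigh quotient converges to $\mathcal D_{N,p,s}/\mathcal D_{1,p,s}$ times the one-dimensional quotient, so sharpness reduces to the $N=1$ case already established in \cite{FrSe}. Your route can be made rigorous, but as written the key estimate is missing.
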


For $p\geq 2$, inequality \eqref{eq:main} holds even with a remainder term.

\begin{theorem}[\textbf{Sharp Hardy inequality with remainder}]\label{remainder}
Let $N\geq 1$, $2\leq p<\infty$ and $0<s<1$ with $ps\neq 1$. Then for all $u\in \mathcal W^s_p(\R^N_+)$ and $v:= x_N^{(1-ps)/p} u$,
\begin{align}\label{eq:remainder}
\iint_{\R^N_+\times\R^N_+} \frac{|u(x)-u(y)|^p}{|x-y|^{N+ps} } \,dx\,dy
& - \mathcal D_{N,p,s} \int_{\R^N_+} \frac{|u(x)|^p}{x_N^{ps}}\,dx \notag \\
& \geq c_p\, \iint_{\R^N_+\times\R^N_+} \frac{|v(x)-v(y)|^p}{|x-y|^{N+ps} } \frac{dx}{x_N^{(1-ps)/2}} \frac{dy}{y_N^{(1-ps)/2}}
\end{align}
where $\mathcal D_{N,p,s}$ is given by \eqref{eq:mainconst} and $0<c_p\leq 1$ is given by
\begin{equation}\label{eq:gsrconst}
c_p:=\min_{0<\tau<1/2} \left((1-\tau)^p -\tau^p + p\tau^{p-1}\right)\,.
\end{equation}
If $p=2$, then \eqref{eq:remainder} is an equality with $c_2=1$.
\end{theorem}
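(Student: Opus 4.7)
The strategy is a non-linear, non-local ground state substitution, modelled on the approach used in \cite{FrSe} for the whole-space Hardy inequality \eqref{eq:jfa}. Setting $\omega(x) := x_N^{-(1-ps)/p}$, I write $u = \omega v$, so that $v = x_N^{(1-ps)/p} u$ as in the statement.

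The core analytic input is a pointwise algebraic inequality: for all $a, b \in \R$ and all $\omega_1, \omega_2 > 0$,
\[
|\omega_1 a - \omega_2 b|^p \;\geq\; \Phi(a,b,\omega_1,\omega_2) \;+\; c_p\,(\omega_1\omega_2)^{p/2}\,|a-b|^p,
\]
where $\Phi$ is chosen so that, after integration against $|x-y|^{-N-ps}\,dx\,dy$ and symmetrisation in $(x,y)$, the $\Phi$-integral reduces to
\[
p \int_{\R^N_+} |v(x)|^{p-2} v(x) \left( \text{p.v.}\!\int_{\R^N_+} \frac{|\omega(x)-\omega(y)|^{p-2} (\omega(x)-\omega(y))}{|x-y|^{N+ps}}\,dy \right) dx .
\]
For $p = 2$ this is an identity, since $(\omega_1 a - \omega_2 b)^2 = \omega_1 \omega_2 (a-b)^2 + (\omega_1 - \omega_2)(\omega_1 a^2 - \omega_2 b^2)$, and it yields $c_2 = 1$. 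For $p > 2$, the deficit $|\omega_1 a - \omega_2 b|^p - \Phi$ divided by $(\omega_1\omega_2)^{p/2}|a-b|^p$ becomes, by homogeneity, a function of a single scalar parameter $\tau \in (0,1/2)$; a short calculus argument identifies its minimum with the expression in \eqref{eq:gsrconst}.

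The second ingredient is the explicit computation that $\omega$ is a classical solution of the fractional $p$-Laplacian Euler--Lagrange equation associated with \eqref{eq:main}:
\[
\text{p.v.}\!\int_{\R^N_+} \frac{|\omega(x)-\omega(y)|^{p-2}(\omega(x)-\omega(y))}{|x-y|^{N+ps}}\,dy \;=\; \frac{\mathcal D_{N,p,s}}{p}\,\frac{\omega(x)^{p-1}}{x_N^{ps}} .
\]
Integrating first over $y' \in \R^{N-1}$ at fixed $y_N$ produces the prefactor $2\pi^{(N-1)/2}\Gamma((1+ps)/2)/\Gamma((N+ps)/2)$ times $|y_N - x_N|^{-1-ps}$, and the remaining one-dimensional integral on $(0,\infty)$, after the scaling $r = y_N/x_N$, matches exactly the last factor in \eqref{eq:mainconst}.

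With both ingredients in place, I would substitute $a=v(x)$, $b=v(y)$, $\omega_1=\omega(x)$, $\omega_2=\omega(y)$ into the pointwise inequality, integrate against $|x-y|^{-N-ps}$ over $\R^N_+ \times \R^N_+$, and apply Fubini together with the Euler--Lagrange identity. The $\Phi$-term delivers exactly $\mathcal D_{N,p,s} \int_{\R^N_+} |u|^p x_N^{-ps}\,dx$, while the $c_p$-term gives the remainder on the right-hand side of \eqref{eq:remainder}. Theorem~\ref{main} is then obtained as the special case in which the non-negative remainder is dropped, and the cases of equality there are inherited from those of the pointwise inequality. The main obstacle I anticipate is the first step --- extracting the sharp constant $c_p$ from the pointwise inequality requires a careful algebraic identification of $\Phi$ and a one-parameter optimisation rather than a soft convexity estimate. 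A secondary technical point is that $\omega \notin \mathcal W^s_p(\R^N_+)$, so the Euler--Lagrange identity must be interpreted as a principal value and the substitution $u = \omega v$ must first be justified on $C_0^\infty$ test functions (or $C_0^\infty(\overline{\R^N_+})$ when $ps < 1$) before being extended to all of $\mathcal W^s_p(\R^N_+)$ by density, the Hardy inequality itself providing the required control on $v$.
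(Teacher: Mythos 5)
Your outline is essentially the paper's proof: Theorem \ref{remainder} is obtained by applying the general ground-state representation of \cite{FrSe} (Proposition \ref{gsr} here, whose proof is exactly your pointwise inequality together with the one-parameter optimisation producing \eqref{eq:gsrconst}) to $\omega(x)=x_N^{-(1-ps)/p}$ and $k(x,y)=|x-y|^{-N-ps}$, the only new ingredient being the Euler--Lagrange identity (Lemma \ref{eleq}), which is verified precisely as you describe by integrating out $y'$ via the Beta-function identity \eqref{eq:bessel} and rescaling the remaining one-dimensional integral to match the last factor in \eqref{eq:mainconst}. One normalisation slip to fix: with the convention \eqref{eq:elreg} the principal value equals $\tfrac12\mathcal D_{N,p,s}\,\omega(x)^{p-1}x_N^{-ps}$, not $\tfrac1p\mathcal D_{N,p,s}\,\omega(x)^{p-1}x_N^{-ps}$, and the symmetrised $\Phi$-term should be $2\int_{\R^N_+}|v(x)|^p\omega(x)\bigl(\mathrm{p.v.}\!\int_{\R^N_+}(\omega(x)-\omega(y))|\omega(x)-\omega(y)|^{p-2}k(x,y)\,dy\bigr)dx$; as written, your two displays combine to $\mathcal D_{N,p,s}\int|v|^{p-2}v\,\omega^{p-1}x_N^{-ps}\,dx$ rather than the required $\mathcal D_{N,p,s}\int|u|^p x_N^{-ps}\,dx$.
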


We conclude this section by mentioning an open problem concerning fractional Hardy--Sobolev--Maz'ya inequalities. If $p\geq 2$ and $0<s<1$ with $1<ps<N$, is it true that the left side of \eqref{eq:remainder} is bounded from below by a positive constant times
$$
\left( \int_{\R^N_+} |u|^q \,dx \right)^{p/q},
\qquad q= Np/(N-ps) \,?
$$
The analogous estimate for $s=1$,
\begin{equation}\label{eq:hsm}
\int_{\R^N_+} |\nabla u|^p \,dx - \left(\frac{p-1}{p}\right)^p \int_{\R^N_+} \frac{|u(x)|^p}{x_N^{p}}\,dx
\geq \sigma_{N,p} \left( \int_{\R^N_+} |u|^q \,dx \right)^{p/q}\,,
\quad q= Np/(N-p)\,,
\end{equation}
is due to Maz'ya (for $p=2$) \cite{M} and Barbatis--Filippas--Tertikas (for $2<p<N$) \cite{BFT}; see also \cite{BFL} for the sharp value of $\sigma_{3,2}$. The proof of \eqref{eq:hsm} is based on the analogue of \eqref{eq:remainder},
$$
\int_{\R^N_+} |\nabla u|^p \,dx - \left(\frac{p-1}{p}\right)^p \int_{\R^N_+} \frac{|u(x)|^p}{x_N^{p}}\,dx
\geq c_p \int_{\R^N_+} |\nabla v|^p x_N^{p-1} \,dx \,,
\qquad u= x_N^{(p-1)/p} v \,.
$$


\section{Proofs}\label{sec:proofmain}

\subsection{General Hardy inequalities}

This subsection is a quick reminder of the results in \cite{FrSe}. Throughout we fix $N\geq 1$, $p\geq 1$ and an open set $\Omega\subset\R^N$. Let $k$ be a non-negative measurable function on $\Omega\times\Omega$ satisfying $k(x,y)=k(y,x)$ for all $x,y\in\Omega$ and define
$$
E[u] := \iint_{\Omega\times\Omega} |u(x)-u(y)|^p k(x,y) \,dx\,dy \ .
$$
Our key assumption for proving a Hardy inequality for the functional $E$ is the following.

\begin{assumption}\label{ass:el}
 Let $\omega$ be an a.e. positive, measurable function on $\Omega$. There exists a family of measurable functions $k_\epsilon$, $\epsilon>0$, on $\Omega\times\Omega$ satisfying $k_\epsilon(x,y)=k_\epsilon(y,x)$, $0\leq k_\epsilon(x,y) \leq k(x,y)$ and
\begin{equation}\label{eq:assk}
\lim_{\epsilon\to 0} k_\epsilon(x,y) = k(x,y)
\end{equation}
for a.e. $x,y\in\Omega$. Moreover, the integrals
\begin{equation}\label{eq:elreg}
 V_\epsilon(x) := 2\ \omega(x)^{-p+1} \int_{\Omega} \left(\omega(x)-\omega(y)\right) \left|\omega(x)-\omega(y)\right|^{p-2} k_\epsilon(x,y) \,dy
\end{equation}
are absolutely convergent for a.e. $x$, belong to $L_{1,\loc}(\Omega)$ and $V:=\lim_{\epsilon\to 0} V_\epsilon$ exists weakly in $L_{1,\loc}(\Omega)$, i.e., $\int V_\epsilon g\,dx \to \int V g\,dx$ for any bounded $g$ with compact support in $\Omega$.
\end{assumption}

The following abstract Hardy inequality was proved in \cite{FrSe} in the special case $\Omega=\R^N$. The general case considered here is proved by exactly the same arguments.

\begin{proposition}\label{hardy}
Under Assumption \ref{ass:el}, for any $u$ with compact support in $\Omega$ and $E[u]$ and $\int V_+ |u|^p \,dx$ finite one has
\begin{equation}\label{eq:hardy}
 E[u] \geq \int_{\Omega} V(x) |u(x)|^p \,dx \ .
\end{equation}
\end{proposition}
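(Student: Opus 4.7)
The plan is to repeat, nearly verbatim, the ground-state substitution argument from \cite{FrSe}, whose $\R^N$ version carries over unchanged to an arbitrary open set $\Omega$ because the approximation hypothesis is stated entirely in terms of the truncated kernels $k_\epsilon$ (no boundary of $\Omega$ enters). Set $v := u/\omega$, well-defined a.e.\ since $\omega>0$ a.e. The algebraic heart of the method, proved pointwise in \cite{FrSe}, is the inequality
$$|u(x)-u(y)|^p \;\geq\; \omega(x)|v(x)|^p\,(\omega(x)-\omega(y))|\omega(x)-\omega(y)|^{p-2} \;+\; (x\leftrightarrow y),$$
valid for any real $u(x),u(y)$ and positive $\omega(x),\omega(y)$, with a nonnegative remainder term (that remainder is exactly what will produce Theorem \ref{remainder} in the case $p\geq 2$).

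Next, I would multiply this pointwise bound by $k_\epsilon(x,y)\geq 0$ and integrate over $\Omega\times\Omega$. The domination $0\leq k_\epsilon\leq k$ shows the left side is at most $E[u]$, while on the right, the symmetry $k_\epsilon(x,y)=k_\epsilon(y,x)$ lets me combine the two $(x\leftrightarrow y)$ terms. Together with the definition \eqref{eq:elreg} of $V_\epsilon$ and the identity $\omega^p|v|^p=|u|^p$, the right side collapses to $\int_\Omega V_\epsilon(x)|u(x)|^p\,dx$, yielding
$$E[u] \;\geq\; \int_\Omega V_\epsilon(x)|u(x)|^p\,dx \qquad\text{for every }\epsilon>0.$$

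The last step is to send $\epsilon\to 0$. Since $u$ has compact support in $\Omega$ and $\int V_+|u|^p\,dx<\infty$, one may (after a standard truncation of $|u|^p$ to reduce to a bounded, compactly supported test function) invoke the weak $L^1_{\loc}$ convergence $V_\epsilon\to V$ from Assumption \ref{ass:el} to conclude $\int V_\epsilon |u|^p\,dx \to \int V|u|^p\,dx$. The main obstacle in the scheme is the pointwise algebraic inequality quoted above, which is genuinely nontrivial for $p\neq 2$ but is imported as a black box from \cite{FrSe}; all remaining work is a symmetrization plus a routine cutoff-and-limit argument calibrated exactly to the hypotheses of Assumption \ref{ass:el}.
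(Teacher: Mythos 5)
Your proposal is correct and is essentially the paper's own proof: the paper simply cites \cite{FrSe} and notes that the argument for $\Omega=\R^N$ carries over verbatim to general $\Omega$, which is exactly the ground-state substitution, pointwise inequality, symmetrization, and $\epsilon\to 0$ limit you describe.
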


For $p\geq 2$, a stronger version of \eqref{eq:hardy} is valid which includes a remainder term.

\begin{proposition}\label{gsr}
Let $p\geq 2$. Under Assumption \ref{ass:el}, for any $u$ with compact support in $\Omega$ write $u=\omega v$ and assume that $E[u]$, $\int V_+ |u|^p \,dx$, and 
$$
E_\omega[v] := \iint_{\Omega\times\Omega} |v(x)-v(y)|^p \, \omega(x)^{\tfrac p2} k(x,y) \omega(x)^{\tfrac p2} \,dx\,dy
$$
are finite. Then
\begin{equation}\label{eq:gsr}
 E[u] - \int_{\Omega} V(x) |u(x)|^p \,dx \geq c_p \, E_\omega[v]
\end{equation}
with $c_p$ from \eqref{eq:gsrconst}. If $p=2$, then \eqref{eq:gsr} is an equality with $c_2=1$.
\end{proposition}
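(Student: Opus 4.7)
The plan is to follow the ground state representation strategy of \cite{FrSe}: \eqref{eq:gsr} reduces, after a symmetrization of the $V$-term, to a pointwise algebraic inequality in the four real variables $(\omega(x),\omega(y),v(x),v(y))$, which for $p=2$ is an identity with $c_2=1$ and for $p>2$ supplies precisely the $c_p$-remainder.

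First I would symmetrize. Using $|u(x)|^p=\omega(x)^p|v(x)|^p$ in \eqref{eq:elreg} together with $k_\epsilon(x,y)=k_\epsilon(y,x)$ and the antisymmetry of $(\omega(x)-\omega(y))|\omega(x)-\omega(y)|^{p-2}$ (split $\int V_\epsilon|u|^p\,dx$ in half, relabel $x\leftrightarrow y$ in one half and add), one obtains
\begin{equation*}
\int_\Omega V_\epsilon(x)|u(x)|^p\,dx = \iint_{\Omega\times\Omega}\bigl(\omega(x)|v(x)|^p-\omega(y)|v(y)|^p\bigr)(\omega(x)-\omega(y))|\omega(x)-\omega(y)|^{p-2}\,k_\epsilon(x,y)\,dx\,dy.
\end{equation*}
Consequently $E_\epsilon[u]-\int V_\epsilon|u|^p\,dx$ is a single double integral weighted by $k_\epsilon$, and \eqref{eq:gsr} would follow from the pointwise inequality
\begin{equation*}
|\alpha a-\beta b|^p - (\alpha|a|^p-\beta|b|^p)(\alpha-\beta)|\alpha-\beta|^{p-2} \geq c_p\,(\alpha\beta)^{p/2}|a-b|^p
\end{equation*}
for all $\alpha,\beta>0$ and $a,b\in\R$, applied with $(\alpha,\beta,a,b)=(\omega(x),\omega(y),v(x),v(y))$.

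For $p=2$ this is a genuine identity: direct expansion gives $(\alpha a-\beta b)^2-(\alpha a^2-\beta b^2)(\alpha-\beta)=\alpha\beta(a-b)^2$, so equality holds with $c_2=1$. For $p>2$, homogeneity in $(\alpha,\beta)$ and in $(a,b)$ reduces the inequality to a two-variable problem in $(s,t):=(\beta/\alpha,b/a)\in(0,\infty)\times\R$ (treating $a=0$ as a separate case). A calculus analysis of the resulting function, exploiting convexity of $t\mapsto|t|^p$ to confine the extremum to a bounded region, collapses the worst case to a single-parameter extremal problem whose solution is precisely $c_p$ from \eqref{eq:gsrconst}.

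Finally I would integrate the pointwise inequality against $k_\epsilon(x,y)$ over $\Omega\times\Omega$ to get $E_\epsilon[u]-\int V_\epsilon|u|^p\,dx\geq c_p E_{\omega,\epsilon}[v]$, where $E_\epsilon$ and $E_{\omega,\epsilon}$ are defined with $k_\epsilon$ in place of $k$, and then pass to $\epsilon\to 0$ exactly as in the proof of Proposition~\ref{hardy}. Here $E_\epsilon[u]\to E[u]$ and $E_{\omega,\epsilon}[v]\to E_\omega[v]$ by monotone (or dominated) convergence from $k_\epsilon\leq k$ and \eqref{eq:assk}, while $\int V_\epsilon|u|^p\,dx\to\int V|u|^p\,dx$ by the weak $L_{1,\loc}$-convergence of $V_\epsilon$ in Assumption~\ref{ass:el}, tested against the bounded compactly supported $|u|^p$. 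The main obstacle is the sharp pointwise inequality for $p>2$: one must handle arbitrary signs of $a$ and $b$ (the absolute values appear asymmetrically) and verify that the extremum giving $c_p$ is attained at some $\tau\in(0,1/2)$ rather than at a boundary, which requires a careful monotonicity/convexity argument together with a Bernoulli-type expansion.
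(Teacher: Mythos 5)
Your proposal is correct and follows essentially the same route as the paper, which proves this proposition by invoking the ground state representation argument of \cite{FrSe}: your symmetrization of the $V_\epsilon$-term and your homogeneous pointwise inequality are exactly the homogenized form of the key elementary lemma there (dividing by $\alpha^p$ and setting $t=\beta/\alpha\le 1$ gives $|a-tb|^p-(1-t)^{p-1}\left(|a|^p-t|b|^p\right)\ge c_p\, t^{p/2}|a-b|^p$), and your $\epsilon\to0$ passage and the $p=2$ identity check match as well. The only part you leave as a sketch is the calculus proof of that elementary inequality for $p>2$, which is indeed the technical heart of the argument but is precisely the content of the cited lemma, with your constant agreeing with \eqref{eq:gsrconst}.
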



\subsection{Proof of Theorem \ref{main}}

Throughout this subsection we fix $N\geq 1$, $0<s<1$ and $p\neq 1/s$ and we abbreviate
$$
\alpha:=(1-ps)/p \, .
$$
We will deduce the sharp Hardy inequality \eqref{eq:main} using the general approach in the previous subsection with the choice
\begin{equation}\label{eq:abbreviate}
 \omega(x)= x_N^{-\alpha}\, ,
\quad
k(x,y) = |x-y|^{-N-ps}\, ,
\quad
V(x) = \mathcal D_{N,p,s} x_N^{-ps} \, .
\end{equation}
The key observation is

\begin{lemma}\label{eleq}
 One has uniformly for $x$ from compacts in $\R^N_+$
\begin{equation}\label{eq:eleq}
 2 \lim_{\epsilon\to 0} \int_{y\in\R^N_+\,, \left|x_N-y_N\right|>\epsilon } 
\left(\omega(x_N) -\omega(y_N)\right)
\left|\omega(x_N) - \omega(y_N) \right|^{p-2} k(x,y)\,dy
= \frac{\mathcal D_{N,p,s}}{x_N^{ps}} \ \omega(x)^{p-1}
\end{equation}
with $\mathcal D_{N,p,s}$ from \eqref{eq:mainconst}.
\end{lemma}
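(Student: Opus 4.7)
The plan is to reduce \eqref{eq:eleq} to a one-dimensional integral in the variable $r := y_N/x_N$ and then to exploit the involution $r \mapsto 1/r$.

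Since $\omega$ depends only on the last coordinate and the cutoff $|x_N - y_N|>\epsilon$ does not constrain $y'$, Fubini allows the $y'$-integration to be carried out first. The substitution $z' = (y'-x')/|x_N-y_N|$ gives
\[
\int_{\R^{N-1}} \frac{dy'}{|x-y|^{N+ps}} = \frac{C_{N,ps}}{|x_N-y_N|^{1+ps}},
\quad
C_{N,ps} := \int_{\R^{N-1}}\frac{dz'}{(1+|z'|^2)^{(N+ps)/2}} = \frac{\pi^{(N-1)/2}\Gamma(\tfrac{1+ps}{2})}{\Gamma(\tfrac{N+ps}{2})},
\]
the last equality being a standard beta-function computation in polar coordinates. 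Writing $\beta := (ps-1)/p$ so that $\omega(t) = t^\beta$, the scaling $y_N = x_N r$ then factors $\omega(x)^{p-1}x_N^{-ps}$ out of the integrand, reducing \eqref{eq:eleq} to the one-dimensional identity
\[
\mathcal{D}_{N,p,s} = 2\,C_{N,ps}\,\lim_{\delta\to 0}\int_{r>0,\ |1-r|>\delta}\frac{(1-r^\beta)\,|1-r^\beta|^{p-2}}{|1-r|^{1+ps}}\,dr,
\qquad \delta := \epsilon/x_N.
\]
Uniformity on compact subsets of $\R^N_+$ is transparent from this.

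To prove the one-dimensional identity I split the integral at $r=1$ and apply the involution $r\mapsto 1/r$ to the piece $\{r > 1+\delta\}$. Using $1 - r^{-\beta} = -(1-r^\beta)r^{-\beta}$ and collecting the powers of $r$ from the Jacobian $dr/r^2$, the factor $(r-1)^{-1-ps}$, and the absolute values (the exponents collapse to $\beta$ thanks to $ps - 1 = p\beta$), one finds that this piece transforms into
\[
-\int_0^{1/(1+\delta)} \frac{(1-r^\beta)\,|1-r^\beta|^{p-2}\,r^\beta}{(1-r)^{1+ps}}\,dr.
\]
Adding the piece $\{r<1-\delta\}$ and using the algebraic identities $(1-r^\beta) - (1-r^\beta)r^\beta = (1-r^\beta)^2$ together with $(1-r^\beta)^2|1-r^\beta|^{p-2} = |1-r^\beta|^p$ collapses the combined integrand on $(0,1)$ to $|1-r^\beta|^p(1-r)^{-1-ps}$, which is exactly the integrand appearing in \eqref{eq:mainconst}.

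What remains is to pass to the limit $\delta\to 0$. The combined integrand is non-negative and behaves like $|\beta|^p|1-r|^{p-1-ps}$ as $r\to 1$, integrable since $s<1$; at $r = 0$ the worst case is $r^{ps-1}$ (occurring only when $ps<1$), integrable since $ps > 0$; and at $r=\infty$ the integrand decays at least as fast as $r^{-1-s}$. The small mismatch between the cutoffs $1-\delta$ and $1/(1+\delta)$ in the two halves contributes a term of size $O(\delta^{p(1-s)})$, which vanishes. The principal obstacle is the careful bookkeeping of signs and exponents through the involution $r\mapsto 1/r$; once accomplished, dominated convergence completes the proof.
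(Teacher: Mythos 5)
Your proof is correct. The reduction to one dimension by integrating out $y'$ --- with the constant $\pi^{(N-1)/2}\Gamma(\tfrac{1+ps}{2})/\Gamma(\tfrac{N+ps}{2})$ --- is exactly the computation \eqref{eq:bessel} used in the paper, and the scaling $y_N=x_Nr$ correctly extracts the factor $\omega(x)^{p-1}x_N^{-ps}$ and makes the uniformity on compacts evident. Where you genuinely differ is in the one-dimensional principal value: the paper simply cites \cite[Lem.~3.1]{FrSe} for the whole-line analogue and remarks that restricting to the half-line changes the constant by an absolutely convergent integral over $(-\infty,0)$, whereas you evaluate the half-line principal value from scratch via the involution $r\mapsto 1/r$ (the same device that underlies the cited lemma, but applied directly to the half-line). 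Your bookkeeping checks out: the power of $r$ produced by the substitution is $-\beta(p-1)+ps-1=\beta$, the combined integrand on $(0,1)$ collapses to $|1-r^\beta|^p(1-r)^{-1-ps}$, which is integrable at both endpoints for $ps\neq 1$, and the cutoff mismatch between $1-\delta$ and $1/(1+\delta)$ contributes $O(\delta^{p(1-s)})\to 0$ (this last point matters, since for $p=1$ the two halves are separately divergent near $r=1$ and only their combination is absolutely convergent). The payoff of your route is a self-contained derivation that produces the closed form \eqref{eq:mainconst} directly, with the half-line domain built in from the start; the paper's route is shorter on the page but leaves the reader to reconcile the whole-line constant $\mathcal C_{1,p,s}$ of \cite{FrSe} with $\mathcal D_{1,p,s}$.
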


\begin{proof}
 First, let $N=1$. Then it follows from \cite[Lem. 3.1]{FrSe} that
$$
 2 \lim_{\epsilon\to 0} \int_{y>0\,, \left|x-y\right|>\epsilon }
\left(\omega(x) -\omega(y)\right)
\left|\omega(x) - \omega(y) \right|^{p-2} k(x,y)\,dy
= \frac{\mathcal D_{1,p,s}}{x^{ps}} \ \omega(x)^{p-1}
$$
uniformly for $x$ from compacts in $(0,\infty)$. To be more precise, in \cite[Lem. 3.1]{FrSe} the $y$-integral was extended over the whole axis. Therefore the difference between the constant $\mathcal C_{1,s,p}$ in \cite[(3.2)]{FrSe} and our $\mathcal D_{1,p,s}$ here comes from the absolutely convergent integral
$$
2 \int_{-\infty}^0
\left(\omega(x) -\omega(|y|)\right)
\left|\omega(x) - \omega(|y|) \right|^{p-2} \frac{dy}{(x-y)^{1+ps}} \,.
$$
This proves the assertion for $N=1$. In order to extend the assertion to higher dimensions we use the fact (see \cite[(6.2.1)]{AbSt}) that
\begin{align}\label{eq:bessel}
\int_{\R^{N-1}} \frac{dy'}{\left(|x'-y'|^2+m^2\right)^{(N+ps)/2}}
& = |\Sph^{N-2}| m^{-1-ps} \int_0^\infty \frac{r^{N-2} \,dr}{(r^2+1)^{(N+ps)/2}} \notag \\
& = \frac 12 |\Sph^{N-2}| m^{-1-ps} \frac{\Gamma((N-1)/2)\ \Gamma((1+ps)/2)}{\Gamma((N+ps)/2)}
\end{align}
for $N\geq 2$. Recalling $|\Sph^{N-2}|=2\pi^{(N-1)/2}/\Gamma((N-1)/2)$ concludes the proof.
\end{proof}

\begin{proof}[Proof of Theorem \ref{main}]
According to Lemma \ref{eleq}, Assumption \ref{ass:el} is satisfied with kernel $k_\epsilon(x,y)=|x-y|^{-N-ps}\chi_{\{|x_N-y_N|>\epsilon\}}$. Hence inequality \eqref{eq:main} for $u\in C_0^\infty(\R^N_+)$ follows from Proposition \ref{hardy}. By density it holds for all $u\in \mathcal W^s_p(\R^N_+)$. Strictness for $p>1$ follows by the same argument as in \cite{FrSe}. In order to discuss equality in \eqref{eq:main} for $p=1$ we first note that for equality it is necessary that $u$ is proportional to a non-negative function, which we assume henceforth. From \cite[(2.18)]{FrSe} we see that equality holds iff for a.e. $x$ and $y$ with $\omega(x_N)>\omega(y_N)$ (that is, $x_N<y_N$) one has
$$
|\omega(x_N) v(x) - \omega(y_N) v(y)| - \left( \omega(x_N) v(x) - \omega(y_N) v(y) \right) = 0
$$
for $v(x):=\omega(x_N)^{-1} u(x)$. Since for numbers $a,b\geq 0$ the equality $|a-b|-(a-b)=0$ holds iff $b\leq a$, we conclude that for a.e. $x$ and $y$ with $x_N<y_N$ one has $\omega(y_N) v(y) \leq \omega(x_N) v(x)$, that is $u(y)\leq u(x)$. If $N=1$ this means that $u$ is non-increasing. If $N\geq 2$ one sees that for a function $u$ with this property the integral $\int_{\R^N_+} |u| x_N^{-s} \,dx$ is infinite, unless $u\equiv 0$. This proves the strictness assertion in Theorem \ref{main}.

The fact that the constant is sharp for $N=1$ was shown in \cite{FrSe} (with $\R_+$ replaced by $\R$, but this only leads to trivial modifications). In order to prove sharpness in higher dimensions we consider functions of the form $u_n(x) = \chi_n(x') \phi(x_N)$, where
$$
\chi_n(x')=
\begin{cases}
1 & \text{if}\ |x'|\leq n\,,\\
n+1-|x'| & \text{if}\ n<|x'|<n+1\,,\\
0 & \text{if}\ |x'|\geq n+1\,.
\end{cases}
$$
An easy calculation using \eqref{eq:bessel} shows that
$$
\frac{\iint_{\R^N_+\times\R^N_+} \frac{|u_n(x)-u_n(y)|^p}{|x-y|^{N+ps} } \,dx\,dy}{\int_{\R^N_+} \frac{|u_n(x)|^p}{x_N^{ps}}\,dx } 
\to A \ 
\frac{\iint_{\R_+\times\R_+} \frac{|\phi(x_N)-\phi(y_N)|^p}{|x_N-y_N|^{1+ps} } \,dx_N\,dy_N}{\int_{\R_+} \frac{|\phi(x)|^p}{x_N^{ps}}\,dx_N } 
$$
as $n\to\infty$ with $A:=\frac 12 |\Sph^{N-2}| \Gamma((N-1)/2)\ \Gamma((1+ps)/2) / \Gamma((N+ps)/2)$. Since $A= \mathcal D_{N,p,s}/ \mathcal D_{1,p,s}$, sharpness of $\mathcal D_{N,p,s}$ for $N\geq 2$ follows from sharpness of $\mathcal D_{1,p,s}$ for $N=1$.
\end{proof}

\begin{proof}[Proof of Theorem \ref{remainder}]
Inequality \eqref{remainder} follows immediately from Proposition \ref{gsr}.
\end{proof}


\bibliographystyle{amsalpha}

\end{document}